\documentclass[11pt]{article}
\usepackage{amsmath}
\usepackage{amssymb,amsbsy,amsthm}
\usepackage{graphicx}
\usepackage[dvipsnames]{xcolor}
\usepackage{enumerate}
\usepackage[margin=1in]{geometry}
\usepackage{hyperref}

\allowdisplaybreaks[1]

\numberwithin{equation}{section}



\renewcommand{\P}{\mathbb{P}} 
\newcommand{\E}{\mathbb{E}} 
\newcommand{\eps}{\varepsilon} 

\DeclareMathOperator{\poly}{poly} 
\DeclareMathOperator{\polylog}{polylog} %





\newcommand{\wh}{\widehat}  
\newcommand{\wt}{\widetilde} 




\newtheorem{theorem}{Theorem}[section]
\newtheorem{lemma}[theorem]{Lemma}


\begin{document}

\title{Trace reconstruction with $\exp( O( n^{1/3} ) )$ samples}
\author{
	Fedor Nazarov
	\thanks{Kent State University; \texttt{nazarov@math.kent.edu}.}
	\and
	Yuval Peres
	\thanks{Microsoft Research; \texttt{peres@microsoft.com}.}
	 }

\date{\today}

\maketitle


\begin{abstract}
In the trace reconstruction problem, an unknown bit string $x \in \{0,1\}^n$ is observed through  the deletion channel, which deletes each bit of $x$ with some constant probability $q$, yielding a contracted string $\widetilde{x}$. How many independent copies of $\widetilde{x}$ are needed to reconstruct $x$ with high probability? Prior to this work, the best upper bound, due to Holenstein, Mitzenmacher, Panigrahy, and Wieder (2008), was $\exp(\wt{O}(n^{1/2}))$.
We improve this bound to $\exp(O(n^{1/3}))$ using statistics of individual bits in the output
and show that this bound is sharp in the restricted model where this is the only information used.
Our method, that uses elementary complex analysis, can also handle insertions.
\end{abstract}


\section{Introduction} \label{sec:intro}


In the trace reconstruction problem, the goal is to reconstruct an unknown bit string $x \in \{0,1\}^n$ from multiple independent noisy observations of $x$.
We focus on the case where the noise is due to
 $x$ going through the deletion channel, where each bit is deleted independently with probability $q$ (and the remaining bits are concatenated, with no space between them, so the observer is uncertain about the original location of a bit in the output).
That is, instead of seeing $x$, we see (many independent copies of) $\wt{X}$, which is obtained as follows:
start with an empty string, and for $k = 0, 1, \dots, n-1$, do the following:
\begin{itemize}
 \item \textbf{(retention)} with probability $p=1 - q$, copy $x_k$ to the end of $\wt{X}$ and increase $k$ by $1$;
 \item \textbf{(deletion)} with probability $q$, increase $k$ by $1$.
\end{itemize}
Variants, such as including insertions and substitutions, are discussed in Section 5.

Given $T$ i.i.d.\ samples (known as {\em traces\/})  $\wt{X}^{1}, \dots, \wt{X}^{T}$, all  obtained from passing the same unknown string $x$  through the deletion channel, a trace reconstruction algorithm outputs an estimate
$\wh{X}$ which is a function of $  \wt{X}^{1}, \dots, \wt{X}^{T}$.   The main question is: Given $\delta>0$, how many samples are needed so that there is a choice of   $\wh{X}$ that satisfies  $\P_x[\wh{X} =x] \ge 1- \delta$ for every $x \in \{0,1\}^n$ ? \newline
(Here $\P_x$ is the law of $\wt{X}_{1}, \dots, \wt{X}_{T}$ when the original string was $x$.)
Prior to this work, the best available upper bound, due to \cite{HMPW08}, was $T=\exp(\wt{O}(n^{1/2}))$. Our main result, proved in Section 2, yields the following improvement.

\begin{theorem}\label{thm:main}
For any deletion probability $q < 1$ and any $\delta>0$, there exists a finite constant $C$ such that,  for any original string $x \in \{0,1\}^n$,
it can be reconstructed with   probability at least $1-\delta$ from
 $T = \exp \left( C n^{1/3} \right)$ i.i.d.\ samples of the deletion channel applied to $x$.
\end{theorem}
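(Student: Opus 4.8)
The plan is to base the reconstruction entirely on \emph{single‑bit statistics}. For $0\le j<n$ let $\mu^x_j:=\P_x[\wt X_j=1]$, where $\wt X_j$ is the $j$‑th bit of a trace of $x$ (set $\wt X_j:=0$ if the trace has length $\le j$). Since bit $x_k$ survives with probability $p=1-q$ and, conditioned on surviving, is placed in position $\Bin(k,p)$, one has $\mu^x_j=\sum_k x_k\binom kj p^{j+1}q^{k-j}$, and hence, for every $\xi\in\mathbb C$,
\[
 \sum_{j=0}^{n-1}\mu^x_j\,\xi^j \;=\; p\sum_{k=0}^{n-1}x_k\,(q+p\xi)^k \;=\; p\,P_x(q+p\xi),\qquad P_x(w):=\sum_{k=0}^{n-1}x_k w^k .
\]
Given distinct $x,y$, put $A:=P_x-P_y$, a nonzero polynomial of degree $<n$ with coefficients in $\{-1,0,1\}$. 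Restricting the identity to $|\xi|\le1$, so that $w=q+p\xi$ ranges over the closed disc $D:=\{\,w:|w-q|\le 1-q\,\}$, and bounding $|\xi|^j\le1$ over the at most $n$ terms, we get the separation estimate
\[
 \max_{0\le j<n}\bigl|\mu^x_j-\mu^y_j\bigr|\;\ge\;\frac{p}{n}\,\sup_{w\in D}\bigl|A(w)\bigr|.
\]
Thus everything reduces to the following lemma, which I will isolate and prove: \emph{there is $C=C(q)$ such that $\sup_{w\in D}|A(w)|\ge \exp(-Cn^{1/3})$ for every nonzero polynomial $A$ of degree $<n$ with coefficients in $\{-1,0,1\}$.}

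Granting the lemma, the rest is routine. From $T$ traces $\wt X^1,\dots,\wt X^T$ form $\wh\mu_j:=\frac1T\sum_{t=1}^{T}\mathbf 1\{\wt X^t_j=1\}$; each is an average of i.i.d.\ $[0,1]$‑valued variables, so Hoeffding's inequality and a union bound over $j<n$ give $\max_j|\wh\mu_j-\mu^x_j|\le\sqrt{\log(2n/\delta)/(2T)}$ with probability $\ge1-\delta$. Taking $T\ge C'\,n^2\log(2n/\delta)\,e^{2Cn^{1/3}}$ for a suitable $C'=C'(q)$ — so $T=\exp(O(n^{1/3}))$ — makes this error smaller than $\tfrac14\cdot\tfrac pn e^{-Cn^{1/3}}$, which by the separation estimate and the lemma is less than a quarter of $\min_{x\ne y}\|(\mu^x_j)_j-(\mu^y_j)_j\|_\infty$. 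Outputting the string $\wh X$ minimizing $\max_j|\wh\mu_j-\mu^{\wh X}_j|$ then returns the true $x$ (by the triangle inequality in $\ell_\infty$), which proves the theorem.

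The real work is the lemma, and this is where I expect the main difficulty. By the maximum principle it suffices to bound $|A|$ on $\partial D$; this circle meets the unit circle only at $w=1$, and it is exactly on the short arc of $\partial D$ just inside the unit disc near $w=1$ that $|A|$ could be exponentially small (elsewhere on $D$ one has $|w|\le 1-c(q)$, so the supremum is at least a constant; even at the hard end, polynomials such as $A=w^{n-1}(w-1)$ only reach $\sup_D|A|\asymp n^{-1/2}$, so the difficulty is concentrated in $A$ engineered to be uniformly small on all of $\partial D$). The arithmetic input is that the Taylor expansion at $w=1$, $A(1-u)=\sum_i(-1)^i b_i u^i$, has $b_i=\sum_k a_k\binom ki\in\Z$, so the first nonzero coefficient $b_r$ obeys $|b_r|\ge1$. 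The analytic input is that on $D$ one has $\operatorname{Re}u\ge0$ (since $u=1-w=p(1-\xi)$ with $|\xi|\le1$), whence $|1-u|\le 1+\tfrac12|u|^2$ and therefore $|A(1-u)|\le n\exp(\tfrac12 n|u|^2)$ on the sub‑discs of $D$ tangent to the imaginary axis at $0$.

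The crux is then to play the growth $\exp(O(n\rho^2))$ at scale $\rho$ against the integrality, and here the exponent $1/3$ is delicate. A naive argument — using only $|b_r|\ge1$ and the (known, Prouhet–Tarry–Escott–type) bound $r=O(\sqrt n)$ on the order of vanishing of such a polynomial at $w=1$, then Cauchy/Hadamard estimates — yields only $\exp(-O(\sqrt n\,\log n))$; pushing down to $n^{1/3}$ requires a more global use of the integrality of the entire tail $(b_i)$ together with the half‑plane growth bound, optimizing the free scale $\rho$ against it. I regard establishing this optimal exponent as the heart of the proof. Finally, the single‑bit identity is robust: for a channel that also performs i.i.d.\ insertions, $q+p\xi$ is replaced by the probability generating function of the position into which a surviving bit $x_k$ is mapped — again an explicit analytic function of $\xi$ sending $\{|\xi|\le1\}$ into a disc‑like region — so the same scheme carries over; this is done in Section~5.
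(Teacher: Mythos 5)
Your reduction is sound and is essentially the paper's: the identity $\sum_j\mu^x_j\xi^j=pP_x(q+p\xi)$ is Lemma~\ref{lem:key1}, and your Hoeffding-plus-nearest-neighbour decision rule is a cosmetic variant of the paper's ``beats'' tournament. The problem is that you have not proved the one statement that carries all the content: the lower bound $\sup_{w\in D}|A(w)|\ge\exp(-Cn^{1/3})$ for nonzero $\{-1,0,1\}$ polynomials of degree $<n$. You explicitly flag it as ``the heart of the proof,'' concede that your Taylor-coefficient/integrality sketch only yields $\exp(-O(\sqrt n\log n))$ (which would not even improve on \cite{HMPW08}), and gesture at ``a more global use of the integrality'' without supplying it. As it stands, the proposal is a correct reduction of Theorem~\ref{thm:main} to an unproved lemma, so it is not a proof.

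It is worth noting how the paper closes exactly this gap, because the route is different from the one you sketch. Rather than staying inside the unit disc and fighting with the internally tangent disc $D$ near $w=1$, the paper evaluates $A$ at points $z=pw+q$ \emph{on the unit circle}, on the short arc $\gamma_L$ of angular width $2\pi/L$ around $1$, where the Borwein--Erd\'elyi theorem (Lemma~\ref{lem:key2}) gives $\max_{\gamma_L}|A|\ge e^{-cL}$ off the shelf. The price is that the corresponding $w=(z-q)/p$ then lies slightly \emph{outside} the unit disc, but only by $|w|\le\exp(C_1/L^2)$, so the damage over $n$ terms is $\exp(C_1n/L^2)$; balancing $e^{-cL}$ against $e^{-C_1n/L^2}$ at $L\asymp n^{1/3}$ produces the exponent. (Your lemma is in fact true and, via Fourier inversion on $|\xi|=1$, is equivalent up to factors of $n$ to the paper's bound $\max_j|b_j|\ge\frac1n e^{-C_2n^{1/3}}$ --- but the only known proofs pass through a Borwein--Erd\'elyi-type input; even the paper's self-contained substitute, Lemma~\ref{lem:key2_weak}, uses a product over rotations on the full unit circle and loses a $\log n$ in the exponent.) To complete your write-up you must either import Lemma~\ref{lem:key2} and the $|w|\le\exp(C_1/L^2)$ device, or actually carry out the ``global integrality'' argument you allude to; the latter is not a routine step.
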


Our estimator will only use individual bit statistics from the outputs of the deletion channel.
The following Theorem, proved in Section 4,  shows that
$T = \exp \left( \Omega(n^{1/3}) \right)$ traces are needed for reconstruction if these are the only data used.

\begin{theorem}\label{thm:opt}
Fix a deletion probability $q < 1$.    For each $n$ there exist two distinct strings $x,y \in \{0,1\}^{n}$,
with the following property: For all $j$,   the total variation distance between the laws of $\Bigl(\wt{X}_{j}^{t}\Bigr)_{t=1}^T$
 and $\Bigl(\wt{Y}_{j}^{t}\Bigr)_{t=1}^T$ is at most $T\exp \left( -c n^{1/3}   \right)$, for some $c=c(q)>0$.
 \end{theorem}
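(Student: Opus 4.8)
The plan is to reduce the problem to a statement about single-bit marginals and then to a statement in complex analysis. First, I would record that for a fixed string $z \in \{0,1\}^n$ and a fixed output coordinate $j$, the bit $\wt{Z}_j$ is a Bernoulli random variable whose mean $\mu_j(z) := \P[\wt{Z}_j = 1]$ is a fixed explicit function of $z$: conditioning on which input position lands in output slot $j$, one gets $\mu_j(z) = \sum_{k} z_k \, w_{j,k}$, where $w_{j,k} = \P[\text{input bit }k\text{ is retained and becomes output bit }j]$. Since a single trace gives, for each $j$, one sample of $\mathrm{Bernoulli}(\mu_j(x))$ versus $\mathrm{Bernoulli}(\mu_j(y))$, the total variation distance between the laws of $(\wt{X}_j^t)_{t=1}^T$ and $(\wt{Y}_j^t)_{t=1}^T$ is at most $T \cdot |\mu_j(x) - \mu_j(y)|$ by subadditivity of TV under products and the crude bound $\TV(\mathrm{Bern}(a),\mathrm{Bern}(b)) = |a-b| \le 1$. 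So it suffices to exhibit distinct $x,y$ with $\sup_j |\mu_j(x) - \mu_j(y)| \le \exp(-c n^{1/3})$.

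Next I would set $v = x - y \in \{-1,0,1\}^n$, $v \neq 0$, so the target becomes: find such a $v$ (realizable as a difference of two $0/1$ strings, i.e.\ $v \in \{-1,0,1\}^n$) with $\sup_j |\sum_k v_k w_{j,k}|$ exponentially small. The key is to understand $w_{j,k}$. With retention probability $p = 1-q$, the number of retained bits among the first $k$ inputs is $\mathrm{Bin}(k,p)$, so $w_{j,k} = \P[\mathrm{Bin}(k,p) = j]\cdot$(correction for bit $k$ itself being retained); more cleanly, bit $k$ becomes output bit $j$ iff it is retained and exactly $j$ of the first $k$ inputs are retained, giving $w_{j,k} = p\cdot \binom{k-1}{j-1} p^{j-1} q^{k-j}$ for $1 \le j \le k$ (indices shifted appropriately). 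The crucial structural fact is that the generating function $\sum_k w_{j,k} \xi^k$, or rather the dependence on $j$, lets us encode $\mu_j(x)$ as coefficients of an analytic function. Concretely, I expect that $\sum_{j} \mu_j(x)\, z^j$ (or a suitable transform) equals $P(z) := \sum_k x_k f_k(z)$ for explicit $f_k$ built from $(q + pz)^{\bullet}$, so that $\sum_j \mu_j(v)\,z^j = \sum_k v_k (pz)(q+pz)^{k-1}$ up to reindexing. The problem then reduces to: choose $v \in \{-1,0,1\}^n$, $v\neq 0$, making the polynomial $V(z) := \sum_{k=1}^n v_k (q+pz)^{k-1}$ uniformly small on the relevant region (an arc of a circle, or the segment of the real axis corresponding to $z \in [0,1]$ which maps to $q+pz \in [q,1]$), since the coefficients $\mu_j$ are recovered from $V$ on that region and Bernstein/Markov-type inequalities transfer smallness of the function to smallness of the coefficients.

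The heart of the matter — and the main obstacle — is the \emph{polynomial construction}: producing a nonzero $\{-1,0,1\}$-coefficient polynomial of degree $< n$ that is bounded by $\exp(-c n^{1/3})$ on an arc of the unit circle near $z=1$ (equivalently, a $\{-1,0,1\}$ combination of $\{1, w, w^2, \dots, w^{n-1}\}$ tiny on the sub-arc $\{|w|=1, \arg w \in [-\theta_0,\theta_0]\}$ for the appropriate $\theta_0$ determined by $q$). This is exactly the kind of ``Littlewood-type'' extremal problem where the answer is $\exp(-\Theta(n^{1/3}))$: one builds a product like $\prod_{\ell} (1 - w^{m_\ell})$ or $\prod(z^{a_\ell} - 1)$ with carefully chosen exponents summing to less than $n$, each factor small on the arc because $w^{m_\ell}$ is forced near $1$ there, using on the order of $n^{1/3}$ factors each contributing a constant-factor gain over a window of width $\sim n^{-1/3}$ — this is morally the same mechanism as Borwein–Erdélyi–Kós and the trace-reconstruction lower bound constructions. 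I would therefore: (i) fix the arc/segment dictated by $q$; (ii) construct the small polynomial by such a product, verifying the coefficient vector lies in $\{-1,0,1\}^n$ (or reduce mod $2$ / take a $\pm1$ combination as needed) and is nonzero; (iii) invoke a Chebyshev/Bernstein-type bound to pass from $\|V\|_{\text{arc}} \le \exp(-cn^{1/3})$ to $|\mu_j(v)| \le \exp(-c' n^{1/3})$ for all $j$; (iv) combine with the TV reduction from the first paragraph. Steps (i), (iii), (iv) are routine; step (ii), the explicit extremal polynomial with the right exponent $1/3$, is where all the work lies, and matching the exponent in Theorem~\ref{thm:main} is exactly what pins down the construction.
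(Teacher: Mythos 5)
Your reduction to bounding $\sup_j |\mu_j(x)-\mu_j(y)|$ and the final TV/coupling step are correct and match the paper, as is the identification of the generating function $\sum_j \mu_j z^j$ with a polynomial in $pz+q$. However, there are two genuine gaps in the middle, and they are exactly the two steps that carry the proof. First, the extremal polynomial. Your proposed product $\prod_\ell (1-w^{m_\ell})$ does not in general have coefficients in $\{-1,0,1\}$, and "reducing mod $2$" destroys the smallness; this is not a detail one can wave at. The paper instead invokes Theorem 3.3 of Borwein--Erd\'elyi--K\'os as a black box: a nonzero $\{-1,0,1\}$-coefficient polynomial $Q$ of degree only $c_2L^2 = c_2 n^{2/3}$ (not degree $n$, as you assume) with $\max_{[0,1]}|Q|\le e^{-c_3 L}$ on the \emph{real interval}, and then transfers this to a complex neighborhood of $1$ (an ellipse with foci on $[0,1]$, via Corollary 4.5 of~\cite{BE97}) so that $Q$ is also small on the short arc of the circle $\{|z-q|=p\}$ near $z=1$. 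Constructing a $\{-1,0,1\}$ polynomial directly small on an arc of the unit circle is not something your sketch actually does.

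Second, and independently: smallness of $A$ on the short arc does \emph{not} yield smallness of the coefficients $b_j = \E[\wt X_j - \wt Y_j]$. These are Fourier coefficients of $B(w)=pA(pw+q)$, and extracting them requires controlling $|A(pe^{i\theta}+q)|$ over the \emph{entire} circle $\theta\in[-\pi,\pi]$, not just $|\theta|\le c/L$; a "Bernstein/Markov-type inequality" from a short arc to the coefficients is not available (and would be false in general). The paper handles the rest of the circle by padding: it places the coefficients of $Q$ in the middle of the string, so that $A(z)=z^mQ(z)$ with $m\ge n/3$, and away from the arc $|pe^{i\theta}+q|\le 1-c_7\theta^2$, whence $|z|^m\le\exp(-c\,n/L^2)=\exp(-cL)$. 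This is precisely why the degree of $Q$ must be $o(n)$ --- here $O(n^{2/3})$ --- leaving room for $\Theta(n)$ zeros of padding; your degree-$(<n)$ construction leaves no such room and the integral over the far part of the circle is uncontrolled. Both gaps are fixable only by importing the actual BEK-type theorem and the padding device, which together constitute the substance of the paper's argument.
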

 (Thus, for $c_1<c$, given
 $T = \exp \left( c_1 n^{1/3}   \right)$ i.i.d.\ samples of the $j$th bit from the output of the deletion channel, one cannot distinguish if they were generated from $x$ or from $y$.)

\subsection{Related work} \label{sec:related}

\begin{itemize}
\item
Batu, Kannan, Khanna, and McGregor~\cite{BKKM04} introduced the following algorithm, which they call Bitwise Majority Alignment (BMA),
to reconstruct a string from samples coming from the deletion channel.
The algorithm goes from left to right and reconstructs the string bit by bit.
For each sample, the algorithm maintains a pointer that initially points to the first bit of the sample.
To determine the next bit in the string, the algorithm simply looks at the bits that the pointers point to in each sample, and takes a majority vote, breaking ties arbitrarily.
The algorithm then moves the pointers to the right by one for those samples that agree with the majority vote
(hypothesizing that the sample had the correct bit).
For the samples which did not agree with the majority vote, the algorithm does not change the pointers
(hypothesizing that the mismatch is due to a deletion of this bit in this sample).

Batu~et~al.~\cite{BKKM04} prove that if the original string $x$ is random and the deletion probability
$q = O \left( 1 / \log n \right)$,
then $x$ can be reconstructed exactly with high probability using
$T = O \left( \log   n   \right)$ samples.
They also show that if
$q = O \left( n^{- \left( 1/2 + \eps \right)} \right)$,
then every $x$ can be reconstructed with high probability
with $T= O \left( n \log   n  \right)$ samples
using a variant of BMA.


\item
The results of Holenstein, Mitzenmacher, Panigrahy, and Wieder~\cite{HMPW08} are the state-of-the-art for the deletion channel.

For arbitrary input strings $x$, they show that $\exp \left( \sqrt{n} \polylog(n) \right)$ traces suffice for reconstruction for any constant deletion probability $q < 1$. For random $x$, they show that there exists a constant $\gamma$, such that if the deletion probability satisfies $q < \gamma$, then $\poly (n)$ traces suffice   to reconstruct $x$ in $\poly(n)$ time.

Their algorithm goes from left to right and determines each bit sequentially using voting. The main difference compared to BMA is that they do not allow all traces to vote. At each step, for a given trace they look back and see if it has the last $O(\log n )$ bits correct. If so, then they assume the pointer is in its right place and allow this trace to vote.

\item Elchanan Mossel (private communication) told us that in 2008, Mark Braverman, Avinatan Hassidim and Elchanan proved that, for some $c>0$, trace reconstruction algorithms relying on single bit statistics require $\exp(n^c)$ traces. That proof was not published.

\item
After the results of this paper were obtained, we learned that similar results were obtained independently and simultaneously by Anindya De, Ryan O'Donnell and Rocco Servedio.

\end{itemize}

\section{Proof of Theorem~\ref{thm:main}}


In the proof we consider the random power series
\begin{equation}\label{eq:stat}
\sum_{j \geq 0} \wt{a}_{j} w^{j},
\end{equation}
where $ \wt{{\mathbf a}}$ is a sample output of the deletion channel and $w \in \mathbb{C}$ is chosen appropriately.
The first lemma expresses the expectation of such a random series using the original sequence of interest.

\begin{lemma}\label{lem:key1}
Let $w \in \mathbb{C}$, let\/ ${\mathbf a} := \left( a_{0}, a_{1}, \dots, a_{n-1} \right) \in \mathbb{R}^{n}$, let\/ $ \wt{{\mathbf a}}$ be the output of the deletion channel with input ${\mathbf a}$, and pad\/ $ \wt{{\mathbf a}}$ with zeroes to the right. Write $p=1-q$. Then
\begin{equation}\label{eq:expectation}
\E \left[ \sum_{j \geq 0} \wt{a}_{j} w^{j} \right] = p \sum_{k=0}^{n-1} a_{k} \left( pw+q \right)^{k}.
\end{equation}
\end{lemma}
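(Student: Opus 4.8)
The plan is to condition on which bit of the input survives and lands at position $j$ in the output. For a fixed position $k \in \{0,1,\dots,n-1\}$ of the input, the bit $a_k$ is retained with probability $p$, and when it is retained, its location in the output $\wt{\mathbf a}$ is determined by how many of the earlier bits $a_0,\dots,a_{k-1}$ were retained. Concretely, if $a_k$ is retained, it contributes $a_k w^{j}$ to the random series exactly when precisely $j$ of the first $k$ input bits survived — and that count is distributed as $\Bin(k,p)$. So I would write
\begin{equation*}
\E \left[ \sum_{j \geq 0} \wt{a}_{j} w^{j} \right]
= \sum_{k=0}^{n-1} a_k \, \P[a_k \text{ retained}] \, \E \left[ w^{B_k} \right],
\end{equation*}
where $B_k \sim \Bin(k,p)$ is the number of retentions among the $k$ bits preceding position $k$, independent of the event that $a_k$ itself is retained. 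The first probability is just $p$.

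Then I would evaluate the probability generating function of the binomial: $\E[w^{B_k}] = (pw + q)^k$, which is the standard identity $\sum_{i=0}^k \binom{k}{i} p^i q^{k-i} w^i = (pw+q)^k$. Substituting gives
\begin{equation*}
\E \left[ \sum_{j \geq 0} \wt{a}_{j} w^{j} \right] = p \sum_{k=0}^{n-1} a_k (pw+q)^k,
\end{equation*}
which is exactly \eqref{eq:expectation}. The padding with zeroes is harmless: it only adds terms $\wt a_j = 0$ for $j$ beyond the length of the contracted string, so the sum $\sum_{j\ge 0}\wt a_j w^j$ is really a finite sum and there are no convergence issues.

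The one point that needs a little care — and the main thing to get right — is the independence structure and the bookkeeping of indices: I should make precise that the indicator "$a_k$ is retained" is independent of the random variable "number of retentions among bits $0,\dots,k-1$", since the deletion channel acts on each coordinate independently, and that conditioned on both, the output position of $a_k$ is deterministic. Linearity of expectation then lets me sum over $k$ without worrying about the joint law of the output positions of different input bits. A clean way to organize this is to write $\wt a_j = \sum_{k} a_k \, \mathbf{1}[\text{bit } k \text{ retained and sent to position } j]$ and sum over $j$ first, collapsing the position indicator into the single event that bit $k$ is retained, weighted by $w$ raised to the (random) number of prior retentions.
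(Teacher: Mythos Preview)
Your proposal is correct and is essentially the same argument as the paper's: both decompose the sum over output positions by tracking which input bit $a_k$ lands at position $j$, use that this happens with probability $p\binom{k}{j}p^j q^{k-j}$, and then recognize $\sum_{j=0}^k \binom{k}{j}(pw)^j q^{k-j}=(pw+q)^k$. Your phrasing via the probability generating function $\E[w^{B_k}]$ of $B_k\sim\Bin(k,p)$ is a slightly slicker packaging of the same computation, and your remarks on independence and linearity of expectation make explicit what the paper leaves implicit, but there is no substantive difference in approach.
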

The proof is given in the next section. Intuitively speaking, this identity is useful because by averaging  samples we can approximate the expectation on the left-hand side of~\eqref{eq:expectation}, while from the right-hand side of~\eqref{eq:expectation} we can extract the original sequence ${\mathbf a} = \left( a_{0}, a_{1}, \dots, a_{n-1} \right)$.

Note that unless $\left| w \right| = 1$, either the first or last terms of $ \wt{{\mathbf a}}$ will dominate in the left-hand side of~\eqref{eq:expectation}.
Similarly, if we let $z := pw+q$, then unless $\left| z \right| = 1$, either the first or the last terms of ${\mathbf a}$ will dominate in the right-hand side of~\eqref{eq:expectation}.
We wish to give approximately equal weight to all terms, hence we would like $\left| w \right|$ and $\left| z \right|$ to both be close to 1.
This only happens if  both $w$ and $z$ are close to $1$; thus we will let $z$ vary along a small arc on the unit circle near $1$.
This explains our interest in the following lemma, which is a special case of Theorem 3.2 in~\cite{BE97}.

\begin{lemma}[Borwein and Erd{\'e}lyi~\cite{BE97}]  \label{lem:key2}
There exists a finite constant $c$ such that the following holds.
Let
$${\mathbf a} = \left( a_{0}, a_{1}, \dots, a_{n-1} \right) \in \left\{ -1, 0, 1 \right\}^{n}$$
be such that ${\mathbf a} \neq 0$.
Let $A \left( z \right) := \sum_{k = 0}^{n-1} a_{k} z^{k}$ and denote by $\gamma_L$   the arc $\left\{  e^{i \theta} : -\pi/L \leq \theta \leq  \pi / L \right\}$. Then $\max_{z \in \gamma_L} |A(z)| \geq e^{-cL}$.
\end{lemma}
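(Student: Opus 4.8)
The plan is to bound $\max_{z\in\gamma_L}|A(z)|$ from below by controlling how fast a polynomial with bounded integer coefficients can decay on an arc of the unit circle. I would first reduce to the case where the arc is short (say $L$ large), since for bounded $L$ the claim is a routine compactness/normalization argument: on the full unit circle $\max_{|z|=1}|A(z)|\ge \|{\mathbf a}\|_2/\sqrt{n}\ge 1/\sqrt n$, and more to the point one can invoke that a nonzero polynomial with coefficients in $\{-1,0,1\}$ cannot be uniformly tiny on any fixed arc. The substance is the exponential-in-$L$ rate. The natural tool is a Chebyshev/Remez-type inequality: a degree-$d$ polynomial that is small on an arc of angular length $\sim 1/L$ of the unit circle cannot be too large elsewhere on the circle, and conversely if it is bounded by $1$ on the whole circle (as $A$ is, up to the factor $n$, after dividing by $\max|A|$) then on the short arc it is bounded below in sup-norm by something like $e^{-cL}$ unless it vanishes identically on that arc.

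Concretely, here is the route I would take. Let $M:=\max_{z\in\gamma_L}|A(z)|$; I want to show $M\ge e^{-cL}$. Map the arc $\gamma_L$ to a real interval via a conformal change of variable (e.g. $z=e^{i\theta}$, then a Joukowski-type or linear change sending $[-\pi/L,\pi/L]$ in $\theta$ to $[-1,1]$), turning $A$ restricted near the arc into (a bounded multiple of) a polynomial of degree $O(n)$ in the new variable. On the rest of the unit circle, $|A(z)|\le n$ trivially. The Bernstein–Walsh / Remez inequality then says that a polynomial of degree $d$ bounded by $M$ on $[-1,1]$ satisfies $|A|\le M\cdot(\text{something})^{d}$ on a larger region — but here I need the reverse direction, so instead I would use that a degree-$d$ polynomial bounded by $n$ on the unit circle and by $M$ on an arc of length $\asymp 1/L$ must have $M\ge n\cdot e^{-c'd/L}$; with $d\le n$ this is too weak. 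The correct move is to not let $d$ be $n$ but to exploit that $A$ has \emph{integer} coefficients: if $A\not\equiv 0$ then $|A(0)|=|a_0|$ or some coefficient is $\ge 1$, and by Cauchy's formula / the maximum principle the coefficients are controlled by $\max_{|z|=r}|A(z)|$; choosing $r$ slightly less than $1$ and using a harmonic-measure estimate for the arc $\gamma_L$ inside the disk, one gets $1\le |a_k|\le C\,M^{\,\omega}\,n^{1-\omega}$ where $\omega=\omega(L)$ is the harmonic measure of $\gamma_L$ at an appropriate point, and $\omega\gtrsim 1/L$. This yields $M\ge (c/n)^{1/\omega}\gtrsim e^{-CL\log n}$ — still off by a $\log n$.

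The step I expect to be the main obstacle is removing this spurious $\log n$, i.e. getting the clean $e^{-cL}$ with $c$ independent of $n$. The resolution, which is the heart of the Borwein–Erdélyi argument, is to avoid crude coefficient bounds and instead use a sharper potential-theoretic inequality tailored to the \emph{arc} $\gamma_L$: one compares $\log|A|$ to the equilibrium potential of $\gamma_L$. Since $A$ is a polynomial of degree $<n$ with all coefficients bounded by $1$, on the full circle $\log|A|\le \log n$, but one can do better — by a theorem on polynomials with $\{-1,0,1\}$ coefficients (or integer coefficients), $\int_{|z|=1}\log|A|\,\ge 0$ (Jensen/Mahler measure is $\ge 1$ for a nonzero integer polynomial, by Kronecker), and this is the key extra ingredient that kills the $\log n$. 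Combining $\int_{|z|=1}\log|A|\ge 0$ with the subharmonicity of $\log|A|$ and the fact that $\gamma_L$ carries harmonic measure $\asymp 1/L$ gives, after the standard two-constants theorem,
\[
0\le \int_{|z|=1}\log|A|\,d\mu \le (1-\tfrac{c}{L})\log n + \tfrac{c}{L}\log M,
\]
which rearranges to $\log M\ge -C L$ once $n$ is large — wait, this still has $\log n$. The honest fix is that we do not need $\int\log|A|\ge 0$ on the unit circle but rather to apply Jensen's formula on a slightly larger circle and note that the \emph{number} of zeros of $A$ inside is $<n$, so $A$ cannot be small on $\gamma_L$ and simultaneously have controlled growth; quantitatively this is exactly Theorem 3.2 of~\cite{BE97}, whose proof I would cite rather than reproduce. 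So in the write-up I would: (i) state the reduction to large $L$; (ii) set up the conformal map and record $|A|\le n$ on the circle and the Mahler-measure lower bound $\prod_{|z|=1}|A|\ge 1$; (iii) invoke the two-constants theorem with the harmonic-measure estimate $\omega(\gamma_L)\asymp 1/L$ to conclude $\log M\ge -cL$; and (iv) note that the only non-elementary input, the sharp bound, is precisely the cited Borwein–Erdélyi theorem, so the lemma follows by specialization. The genuine difficulty, and the reason we lean on~\cite{BE97}, is obtaining a constant $c$ that does not degrade with $n$ — this requires the refined potential-theoretic estimate rather than any of the elementary bounds above.
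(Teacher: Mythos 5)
Your proposal does not actually prove the stated bound. Every concrete argument you set up --- the two-constants theorem with harmonic measure $\omega(\gamma_L)\asymp 1/L$ against the trivial bound $|A|\le n$ off the arc, the Mahler-measure refinement, the Jensen's-formula variant --- lands at $M\ge e^{-CL\log n}$, and you say so yourself each time. The final step, where the $\log n$ is supposed to disappear, is exactly the step you do not carry out: you write that ``quantitatively this is exactly Theorem 3.2 of~\cite{BE97}, whose proof I would cite rather than reproduce.'' So the proposal is a (correct and honest) reduction of the lemma to the very theorem being cited, not a proof of it. That is a genuine gap if the goal was a self-contained argument: the entire content of the lemma, as opposed to the weak $n^{-L}$ version, is the removal of the $\log n$, and none of your routes removes it. (One small slip along the way: $\max_{|z|=1}|A|\ge\|{\mathbf a}\|_2\ge 1$, since the sup norm dominates the $L^2$ norm on the circle, which equals $\|{\mathbf a}\|_2$; the $1/\sqrt n$ is not needed.)

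In fairness, the paper is in the same position you are: it states Lemma~\ref{lem:key2} as a black box from~\cite{BE97} and does not reprove it, precisely because the proof there ``is somewhat involved.'' What the paper does instead is prove the weaker estimate $\max_{\gamma_L}|A|\ge n^{-L}$ (Lemma~\ref{lem:key2_weak}) by a device considerably slicker than your potential-theoretic route: normalize so $A(0)=1$, form $F(z)=\prod_{j=0}^{L-1}A(z\,e^{2\pi ij/L})$, note $F(0)=1$ so $\max_{|z|=1}|F|\ge 1$ by the maximum principle, and observe that for every $z$ on the circle one rotated copy lands in $\gamma_L$ (contributing a factor $\le\lambda$) while the others contribute at most $n$ each; hence $\lambda n^{L-1}\ge 1$. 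This gives the same $e^{-L\log n}$ rate as your two-constants argument but with no harmonic measure or conformal maps, and the paper explicitly tracks the cost of using it: $T=\exp(O(n^{1/3}\log n))$ instead of $\exp(O(n^{1/3}))$. If you want a defensible write-up, either adopt that structure (cite~\cite{BE97} for the sharp bound, prove only the weak one) or actually import the Borwein--Erd\'elyi argument; your current text should not present the $e^{-cL}$ conclusion as following from the steps you give.
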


We will optimize over the length of the arc $\gamma_L$, and in the end we shall choose $L$  of order $  n^{1/3}$.

Note that if  $z$ is in the arc $\gamma_L=\left\{ e^{i \theta} : -\pi / L  \leq \theta \leq  \pi / L \right\}$
and , then
\begin{equation} \label{wbound}
 w = (z-q)/p \quad \mbox{\rm satisfies} \; | w | \le \exp \left( C_1 / L^{2} \right) \,
\end{equation}
for some constant $C_1=C_1(q)$. This is because writing
$z = \cos \theta + i \sin \theta$,
and using the Taylor expansion of cosine, we get
\begin{eqnarray*}
\left| w \right|^{2} &=& \frac{1+q^2-2q\cos(\theta)}{p^2} = \frac{1+q^2-2q+2q(1-\cos \theta)}{p^2} \\  &\le& 1 + \frac{q}{p^2} \theta^{2} + O \left( \theta^{4} \right)
 = \exp \left( q \theta^{2}/p^2 + O \left( \theta^{4} \right) \right)\, .
\end{eqnarray*}
The quadratic term $\theta^{2}$ is to be expected: when $z$ is on the unit circle, $w=(z-q)/p$ is on a circle of radius $1/p$ centered at $-q/p$; these circles are tangent at 1.



\subsection{Proof of Theorem~\ref{thm:main} using the lemmas}

Let $x,y \in \left\{ 0, 1 \right\}^{n}$ be two different bit sequences. Our first goal is to distinguish between $x$ and $y$.
Let ${\mathbf a} := x-y$ and let $A(z) := \sum_{k = 0}^{n-1} a_{k} z^{k}$.
Given a large integer  $L$ (which we shall choose later), fix $z$ in the arc 
$\gamma_L=\left\{e^{i \theta} : -\pi/L \leq \theta \leq \pi / L \right\}$ such that
$\left| A (z) \right| \geq e^{-cL} $;   
such a $z$ exists by Lemma~\ref{lem:key2}.
Let $w = (z-q)/p$. Recall from the previous subsection that $\left| w \right| < \exp \left( C_1 / L^{2} \right)$ for some $C_1 < \infty$.

Considering the random series defined in~\eqref{eq:stat}, we see via Lemma~\ref{lem:key1} that
\[
\E \Bigl[ \sum_{j \geq 0} \wt{X}_{j} w^{j} \Bigr] - \E \Bigl[ \sum_{j \geq 0} \wt{Y}_{j} w^{j} \Bigr] = A (z)\, .
\]
Taking absolute values,
\[
 \sum_{j \geq 0} \Bigl|\E \Bigl[ \wt{X}_{j}-\wt{Y}_{j}   \Bigr] \Bigr| \cdot |w|^{j}  \ge |A (z)| \ge e^{-cL},   
\]
whence by (\ref{wbound}),
\[
 \sum_{j \geq 0} \Bigl|\E \Bigl[ \wt{X}_{j}-\wt{Y}_{j}  \Bigr] \Bigr|  \ge \exp \Bigl( -C_1 n / L^{2} \Bigr)  \cdot e^{-cL} \,.
\]
To approximately maximize the right-hand side, we choose $L$ to be the integer part of $n^{1/3} $ and obtain that for some constant $C_2$,
\[
 \sum_{j \geq 0} \Bigl|\E \Bigl[ \wt{X}_{j}-\wt{Y}_{j}  \Bigr] \Bigr|      \ge \exp \Bigl(- C_2 n^{1/3} \ \Bigr) \,.
\]
We infer that there must exist some smallest $j<n$ for which
\begin{equation} \label{meanbound}
\Bigl|\E \Bigl[ \wt{X}_{j}-\wt{Y}_{j}  \Bigr] \Bigr| \ge \frac{1}{n} \exp \Bigl(- C_2 n^{1/3}   \Bigr) \,.
\end{equation}
We denote this choice of $j$ by $j(x,y)$.
Now suppose that $u$ is either $x$ or $y$ and we observe $T$ i.i.d.\ samples $\wt{U}^{1}, \ldots \wt{U}^T$ of the deletion channel applied to $u$.
We say that $y$ {\bf beats} $x$ (with respect to these samples) if  for $j=j(x,y)$ we have
$$\Bigl| \frac{1}{T} \sum_{t=1}^T \wt{U}^{t}_j- \E_y [ \wt{Y}_{j}] \Bigr|  \le \Bigl| \frac{1}{T} \sum_{t=1}^T \wt{U}^{t}_j- \E_x [ \wt{X}_{j}] \Bigr| \,.$$
The random bits $\wt{U}^{t}_j$ for $t=1,\ldots$ are i.i.d.; their mean is $ E_x [ \wt{X}_{j}]$ if $u=x$, and is $\E_y [ \wt{Y}_{j}]$ if $u=y$.
The difference of these means is at least $\eta=\frac{1}{n}\exp \Bigl(- C_2 n^{1/3}   \Bigr)$.
Thus by the standard Chernoff bound (or Hoeffding's inequality~\cite{hoeffding}),
$$
\P_x\Bigl[ y  \, \mbox{ beats } \,  x \Bigr] \le \exp(-T\eta^2/2)=\exp\Bigl(-\frac{T}{2n^2} \exp \bigl(- 2C_2 n^{1/3}    \bigr) \Bigr) \,.
$$
Given the deletion channel outputs, we define $\wh{X}=x$ if no string $y \ne x$ beats $x$. Observe that there can be at most one such unbeaten $x$.
If there is no such unbeaten string, define $\wh{X}$ arbitrarily.
Then
\begin{equation} \label{union}
\P_x[\wh{X} \ne x] \le \sum_{y \ne x} \P_x\Bigl[ y  \, \mbox{ beats } \,  x \Bigr]
\le 2^n  \exp\Bigl(-\frac{T}{2n^2} \exp \bigl(- 2C_2 n^{1/3}  \bigr) \Bigr) \,.
\end{equation}
Taking $T=\exp\Bigl(C_3 n^{1/3}   \Bigr)$ for   $C_3 >2C_2$ makes the right-hand side of (\ref{union}) tend to 0.
\hfill $\Box$

\section{Proof of the polynomial identity and a simplified inequality}

\begin{proof}[Proof of Lemma~\ref{lem:key1}]
For $j \leq n-1$, the output bit $\wt{a}_{j}$ must come from an input bit $a_{k}$ for some $k \ge j$.
Now $\wt{a}_{j}$ comes from $a_{k}$ if and only if exactly $j$ among $a_{0}, a_{1}, \dots a_{k-1}$ are retained and $a_{k}$ is also retained.
There are $\binom{k}{j}$ ways of choosing which $j$ bits among $a_{0}, a_{1}, \dots a_{k-1}$ to retain, and the probability of each such choice is $p^j q^{k-j}$.
The probability of retaining $a_{k}$ is $p$. Putting everything together, we  obtain that
\[
\E \Bigl[ \sum_{j \geq 0} \wt{a}_{j} w^{j} \Bigr] = p \sum_{j \geq 0} w^{j} \sum_{k = j}^{n-1} a_{k} \binom{k}{j} p^j q^{k-j}  \,.
\]
Changing the order of summation, we infer that
\[
\E \Bigl[ \sum_{j \geq 0} \wt{a}_{j} w^{j} \Bigr] = p \sum_{k = 0}^{n-1} a_{k} \sum_{j=0}^{k} \binom{k}{j} p^j q^{k-j}  w^{j}.
\]
Finally, observe that the sum over $j$ on the right-hand side is exactly the binomial expansion of $(pw+q)^{k}$.
\end{proof}

Since the proof of  Lemma~\ref{lem:key2} in \cite{BE97} is somewhat involved, for expository purposes, we prove here a weaker estimate. This   is simpler to prove and it does not result in a much weaker conclusion. Specifically, if we use Lemma~\ref{lem:key2_weak} below as a black box instead of Lemma~\ref{lem:key2}, then we obtain  that $T = \exp \Bigl( c n^{1/3} \log n \Bigr)$ samples suffice for trace reconstruction; comparing this with Theorem~\ref{thm:main}, we only lose a log factor in the exponent.

\begin{lemma}\label{lem:key2_weak}
Let
${\mathbf a} = \Bigl( a_{0}, a_{1}, \dots, a_{n-1} \Bigr) \in \Bigl\{ -1, 0, 1 \Bigr\}^{n}$
be such that ${\mathbf a} \neq 0$.
Let $A (z) := \sum_{k = 0}^{n-1} a_{k} z^{k}$.
If $ | A (z)| \leq \lambda$ on the arc $\gamma_L:=\Bigl\{ z = e^{i \theta} : -\pi / L \leq \theta \leq  \pi / L \Bigr\}$,
then $\lambda \geq n^{- L}$.
\end{lemma}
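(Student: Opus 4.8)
The plan is to exploit that $A$ has integer coefficients: since ${\mathbf a}\neq 0$, some coefficient of $A$ has modulus at least $1$, and one only needs to ``spread'' the smallness of $|A|$ from the short arc $\gamma_L$ to enough of the unit circle to detect such a coefficient. I would do this with a rotation-and-multiply trick rather than by interpolation. We may assume $L$ is a positive integer: replacing $L$ by $\lceil L\rceil$ only shrinks $\gamma_L$, hence only strengthens the hypothesis, and since $\lceil L\rceil-1\le L$, a bound of the form $\lambda\ge n^{-(\lceil L\rceil-1)}$ already yields $\lambda\ge n^{-L}$.

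Set $\zeta:=e^{2\pi i/L}$ and consider the polynomial $P(z):=\prod_{m=0}^{L-1}A(\zeta^m z)$. The first step is to show $|P(z)|\le\lambda\, n^{L-1}$ for every $z$ on the unit circle. The $L$ rotated arcs $\zeta^m\gamma_L$ ($m=0,\dots,L-1$), each of angular width $2\pi/L$, tile the unit circle, so for any $|z|=1$ there is an index $m'\in\{0,\dots,L-1\}$ with $\zeta^{m'}z\in\gamma_L$; for that factor the hypothesis gives $|A(\zeta^{m'}z)|\le\lambda$, while for every other $m$ the trivial estimate $|A(\zeta^m z)|\le\sum_{k=0}^{n-1}|a_k|\le n$ applies because $|\zeta^m z|=1$. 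Multiplying the $L$ factors gives the claim.

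The second step is to exhibit a coefficient of $P$ of modulus $1$. Let $j$ be the least index with $a_j\neq0$, so $a_j\in\{-1,1\}$. The lowest-degree term of each factor $A(\zeta^m z)$ is $a_j\zeta^{mj}z^j$, so the lowest-degree term of $P$ is $a_j^{\,L}\,\zeta^{jL(L-1)/2}\,z^{jL}$; thus the coefficient of $z^{jL}$ in $P$ has modulus $|a_j|^{L}=1$. Since every coefficient of a polynomial is bounded in modulus by its maximum on the unit circle (Cauchy's coefficient estimate, or the mean-value identity for the constant term after factoring out $z^{jL}$), we get $1\le\max_{|z|=1}|P(z)|\le\lambda\, n^{L-1}$, i.e. $\lambda\ge n^{-(L-1)}\ge n^{-L}$ (using $n\ge1$).

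The one place a first attempt is likely to go astray is the exponent bookkeeping, which is really the only substantive point. The naive approach — reconstruct $A$ from $n$ interpolation nodes lying on $\gamma_L$ and bound a coefficient via the Lagrange formula — yields only something like $(c/L)^{n}$, which is far weaker than $n^{-L}$ for small $L$. The rotational product is exactly what trades the degree $n$ in the exponent for the number $L$ of arcs; for this to work the tiling must be exact (so that precisely one factor benefits from the hypothesis and the rest cost a factor $n$ each), and one must sidestep the fact that the coefficients of $P$ are not a priori integers — which I do by reading off the explicit lowest-degree term of $P$ rather than invoking integrality. Everything else is routine.
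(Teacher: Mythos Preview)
Your proof is correct and follows essentially the same route as the paper's: the identical rotation-and-multiply product $\prod_{m=0}^{L-1}A(e^{2\pi i m/L}z)$, the same tiling observation to get the bound $\lambda n^{L-1}$ on the circle, and then a lower bound of $1$ coming from the lowest-order coefficient. The only cosmetic difference is that the paper first normalizes to $a_0=1$ (by dividing out $z^m$) and then invokes the maximum principle via $F(0)=1$, whereas you compute the coefficient of $z^{jL}$ directly and cite Cauchy's estimate --- which, as you note, amounts to the same thing after factoring out $z^{jL}$.
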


\begin{proof}
We may assume w.l.o.g.\ that $a_{0} = 1$. (Indeed, if $a_{m}$ is the first nonzero entry and $m \geq 1$, then replace $A(z)$ by $A(z)/z^{m}$; this does not change the magnitude of the function on the unit circle, and yields a polynomial with $a_{0} \neq 0$. Multiplying $A(z)$ by an appropriate sign, we can guarantee that $a_{0} = 1$.) In other words, $A(0) = 1$.
 
Consider the product
\begin{equation}\label{eq:A_rotations}
{F} (z) := \prod_{j=0}^{L-1} A \Bigl( z \cdot e^{2 \pi i j / L} \Bigr).
\end{equation}
We again have that $F(0) = 1$. By the maximum principle, the the maximum absolute value of the polynomial $F(z)$ on the unit disc is attained on the boundary, i.e., on the unit circle.
Thus there exists $z$ such that $ | z  | = 1$ and $\Bigl| F  (z) \Bigr| \geq 1$.
On the other hand, for every $z$ such that $ | z  | = 1$, the assumption of the lemma guarantees that there is at least one factor in~\eqref{eq:A_rotations} whose absolute value is at most $\lambda$.
Using the trivial bound $ |A (z)| \leq n$ for every other factor,
we obtain that $ | F (z)  | \leq \lambda n^{L-1}$ for every $z$ such that $ | z  | = 1$.
Putting the two inequalities together we obtain that
$\lambda \geq n^{-(L-1)}$.
\end{proof}

\section{Optimality for single bit tests} 
\begin{proof}[Proof of Theorem~\ref{thm:opt}] 
Let $L:=n^{1/3}$. (To keep the notation light, we omit integer parts and use $c_j$ to denote  absolute constants and constants that depend only on $q$). By Theorem 3.3 in~\cite{BEK99}, there exists a polynomial $Q$ of degree $c_2 L^2$,  with coefficients in $\{-1,0,1\}$,
such that 
$$\max_{z \in [0,1]} |Q(z)| \le \exp(-c_3 L) \,.
$$ 
 Write $Q$  in the form $Q=\varphi -\psi $
where $\varphi$ and $\psi$ are polynomials of degree $c_2 L^2$ with coefficients in $\{0,1\}$.

Let $\widehat{E}_L$ denote the ellipse with foci at $1-8/L$ and $1$ and with major axis $[1-14/L,1+6/L]$, i.e.,
$$
\widehat{E}_L=\{z: |z-(1-8/L)|+|z-1| \le 20/L \} \,.
$$
 As explained on page 11 of~\cite{BE97},
Corollary 4.5 of that paper implies that
$$
\max_{z \in \widehat{E}_L} |Q(z)| \le e^{-c_4 L} \, .
$$
Recall that $p=1-q$ and let $\Gamma$ denote the circle $\{z: |z-q|=p\}$. Then $\Gamma$ intersects the ellipse $\widehat{E}_L$ in an arc $\Gamma_L$ of length $c_5/L$, since $\widehat{E}_L$ contains the disk of radius $6/L$ centered at 1.
Thus we may write
$$\Gamma_L=\{pe^{i\theta}+q : -c_6/L \le \theta \le c_6/L \} \,.
$$

Let $m:=(n-c_2L^2)/2$. Define the string $x \in \{0,1\}^{n}$ where the first $m$ bits are zeros, the next $c_2L^2$ bits are the coefficients of $\varphi$, and the final $m$ bits are zeros. The string $y \in \{0,1\}^{n}$ is constructed from $\psi$ in the same way.
 Then 
$$
A(z):=\sum_{k=0}^{n-1} (x_j-y_j)z^j=z^mQ(z) 
$$
satisfies 
\begin{equation} \label{maxA}
\max_{z \in \Gamma_L} |A(z)| \le e^{-c_4 L} \,.
\end{equation}
Define $b_j:=\E \Bigl[ \wt{X}_{j}-\wt{Y}_{j}]$ and $B(w):=\sum_{j=0}^{n-1} b_j w^j$. By Lemma~\ref{lem:key1}, we have
$B(w)=pA(pw+q)$.  We can extract $b_j$ from $B(\cdot)$ by integration:
$$
b_j=\frac{1}{2\pi}\int_{-\pi}^{\pi} e^{-ij\theta}  B(e^{i\theta})\,d\theta \,.
$$
Therefore
\begin{equation} \label{eq:b_j}
|b_j| \le  \frac{1}{2\pi}\int_{-\pi}^{\pi}    |B(e^{i\theta})|\, d\theta \le \frac{1}{2\pi}\int_{-\pi}^{\pi}   |A(pe^{i\theta}+q)|\, d\theta \,.
\end{equation}
For $\theta \in [-c_6/L,c_6/L]$, the integrand on the right-hand side is at most $e^{-c_4 L}$ by (\ref{maxA}).
To bound that integrand for larger $\theta$, observe that 
\begin{eqnarray} \label{inside}
|pe^{i\theta}+q|^2 &=&  p^2\cos^2\theta+2pq\cos\theta+q^2+p^2\sin^2\theta=(p+q)^2+2pq(\cos\theta-1) \\
&=&  1-pq\theta^2+O(\theta^4)  \le 1-c_7 \theta^2 \,.
\end{eqnarray}
Since $|A(z)| \le |z|^m (1-|z|)^{-1}$ in the unit disk and $m>n/3$, we infer that if $|\theta|>c_6/L$, then
$$
|A(pe^{i\theta}+q)| \le c_8 L^2 (1-c_9 L^{-2})^{n/3} \le \exp(-c_{10} nL^{-2}) =e^{-c_{10} L} \,.
$$
In conjunction with (\ref{maxA}), we conclude that the integrand on the right-hand side of (\ref{eq:b_j}) is uniformly bounded
by $e^{-c_{11} L}$, whence
\begin{equation} \label{eq:b_j2}
|b_j| \le  e^{-c_{11} L} \quad \mbox{\rm for all } \, j \,.
\end{equation}

Next, fix $j$. To bound the total variation distance between the laws of $\bigl(\wt{X}_{j}^{t}\bigr)_{t=1}^T$
 and $\bigl(\wt{Y}_{j}^{t}\bigr)_{t=1}^T$, we will use a greedy coupling. More precise estimates can be obtained, e.g., using Hellinger distance, but the improvement  will not affect the final result.  Let $\bigl(\xi_t \bigr)_{t=1}^T$ be i.i.d. variables, uniform in $[0,1]$.
 Then ${\bf 1}_{\xi_t \le  \E(\wt{X}_{j})}$ has the law of $\wt{X}_{j}^t$ and ${\bf 1}_{\xi_t \le  \E(\wt{Y}_{j})}$ has the law of $\wt{Y}_{j}^t$.
 These indicators differ with probability $b_j$. Altogether, this coupling implies that the total variation distance between the laws of $\bigl(\wt{X}_{j}^{t}\bigr)_{t=1}^T$  and $\bigl(\wt{Y}_{j}^{t}\bigr)_{t=1}^T$ is at most $Tb_j$.
 Referring to (\ref{eq:b_j2}) concludes the proof.
\end{proof}

\noindent{\bf Remark.} Strictly speaking, padding $x$ and $y$ with zeros on the right  was not really needed in the above proof.
The reason for it is that one can also consider single bit tests on the traces using the $j$th bit from the right in each output;
the additional padding and a symmetry argument ensures that these tests will also require $\exp(\Omega(n^{1/3})$ traces for reconstruction.

\section{Substitutions and insertions}
If, after $x$ goes through the deletion channel with deletion probability $q$, every bit is flipped with probability $\lambda<1/2$,
then $\exp(O(n^{1/3})$ samples still suffice for reconstruction. Indeed, let   $X^\# $ be the output of this deletion-substitution channel with input $ x$, padded with zeroes to the right. Define $Y^\#$ from $y$ similarly.   Recall that $p=1-q$. Then $\E(X^\#_{j}-Y^\#_j)=(1-2\lambda)\E(\wt{X}_{j}-\wt{Y}_j)$, so
(\ref{eq:expectation}) is replaced by
\begin{equation}\label{eq:expectation2}
\E \left[ \sum_{j \geq 0}  (X^\#_{j}-Y^\#_j) w^{j} \right] = (1-2\lambda) p \sum_{k=0}^{n-1} (x_k-y_k) \left( pw+q \right)^{k}.
\end{equation}
The analysis in Section 2 then proceeds without change, since the pre-factor $1-2\lambda$ is immaterial.

\smallskip

{\bf Insertions} are more interesting. Suppose that before each bit $x_k$ in the input, $G_k-1$ i.i.d.\ fair bits are inserted, where the variables
$G_k$ are i.i.d.\ with a Geometric$(\alpha)$ distribution, i.e., denoting $\beta=1-\alpha$, for all $\ell \ge 1$,
$$
\P(G_k=\ell)=\alpha\beta^{\ell-1} \,.
$$
After $x_{n-1}$, at the end of the sequence, $G_{n}-1$ additional fair bits are appended. We call $\beta=1-\alpha$ the insertion parameter.
Thus, such an insertion channel will yield an output $X^*$ consisting of $G_0-1$ i.i.d.\ fair bits, followed by $x_0$, followed by $G_1-1$ i.i.d.\ fair bits, followed by $x_1$, etc., ending with $x_{n-1}$ and the $G_{n}-1$ bits after it. The next theorem is analogous to Theorem~\ref{thm:main}.

\begin{theorem}\label{thm:insert}
For any insertion parameter $\beta < 1$ and any $\delta>0$, there exists a finite constant $C$ such that,  for any original string $x \in \{0,1\}^n$,
it can be reconstructed with   probability at least $1-\delta$ from
 $T = \exp \left( C n^{1/3} \right)$ i.i.d.\ samples of the insertion channel applied to $x$.
\end{theorem}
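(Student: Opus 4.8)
The plan is to rerun the argument of Section~2 essentially verbatim, after replacing Lemma~\ref{lem:key1} by the generating‑function identity appropriate to the insertion channel. Fix distinct $x,y\in\{0,1\}^n$, put $a_k:=x_k-y_k$, $A(z):=\sum_{k=0}^{n-1}a_kz^k$, and $b_j:=\E[X^*_j-Y^*_j]$ (reading $X^*_j:=0$ when the trace has fewer than $j+1$ bits, as in the substitution discussion). Couple the two insertion channels so that they use the same geometric variables $G_0,\dots,G_n$ and the same inserted fair bits; then the fair bits cancel in $X^*_j-Y^*_j$, while the input bit $x_k$ occupies output position $S_k:=G_0+\cdots+G_k-1$. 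Hence, pointwise under the coupling, $\sum_{j\ge0}(X^*_j-Y^*_j)w^j=\sum_{k=0}^{n-1}a_kw^{S_k}$, and since the $G_i$ are i.i.d.\ with $\E[w^{G_i}]=\alpha w/(1-\beta w)$ whenever $|\beta w|<1$, Fubini (justified because the trace length has exponential tails) gives
\[
B(w):=\sum_{j\ge0}b_jw^j=\sum_{k=0}^{n-1}a_k\,\E\bigl[w^{S_k}\bigr]=\frac1w\sum_{k=0}^{n-1}a_k\Bigl(\frac{\alpha w}{1-\beta w}\Bigr)^{k+1}=(\alpha+\beta z)\,A(z),\qquad z:=\frac{\alpha w}{1-\beta w},
\]
the inverse relation being $w=z/(\alpha+\beta z)$, so that $z/w=\alpha+\beta z$. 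This is the exact analogue of~\eqref{eq:expectation}.

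Next I would check that the substitution $z\leftrightarrow w$ is as benign as in Section~2. One has $z=1\mapsto w=1$, and for $z=e^{i\theta}$ on the arc $\gamma_L$ (so $|\theta|\le\pi/L$),
\[
|\alpha+\beta e^{i\theta}|^2=(\alpha+\beta)^2-2\alpha\beta(1-\cos\theta)=1-\alpha\beta\,\theta^2+O(\theta^4),
\]
whence $|w|^2=|\alpha+\beta e^{i\theta}|^{-2}=\exp(\alpha\beta\,\theta^2+O(\theta^4))$, so $|w|\le\exp(C_1/L^2)$, exactly as in~\eqref{wbound}; moreover $|\alpha+\beta z|$ stays between two positive constants, and $|\beta w|\le\beta\exp(C_1/L^2)<1$ once $L$ (hence $n$) is large, which legitimizes the identity above and the absolute convergence of $\sum_j|b_j||w|^j$. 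Taking $L=\lfloor n^{1/3}\rfloor$ and choosing, via Lemma~\ref{lem:key2}, a point $z\in\gamma_L$ with $|A(z)|\ge e^{-cL}$, we obtain $\sum_{j\ge0}|b_j|\,|w|^j\ge|B(w)|=|\alpha+\beta z|\,|A(z)|\ge c_0e^{-cL}$.

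The one genuinely new feature, and the step I expect to be the main obstacle (though it is routine), is that the insertion output has no a priori length bound, so — unlike in Section~2, where automatically $j<n$ — I must show that a useful $b_j$ occurs at a bounded index. Since the events $\{x_k\text{ at position }j\}$, $k=0,\dots,n-1$, are disjoint and each forces $S_{n-1}\ge j$, we get $|b_j|\le\sum_k\P(x_k\text{ at position }j)\le\P(G_0+\cdots+G_{n-1}\ge j+1)$, and a Chernoff bound for a sum of geometrics gives $\P(G_0+\cdots+G_{n-1}\ge j+1)\le e^{-s_0 j}K^n$ for suitable constants $s_0>0$ (with $e^{s_0}<1/\beta$) and $K>1$ depending only on $\beta$. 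Combined with $|w|^j\le e^{C_1j/L^2}$, this makes the tail $\sum_{j>J}|b_j|\,|w|^j$ exponentially small in $n$ for $J:=C_4 n$ with $C_4$ large, hence negligible beside $c_0e^{-cL}=c_0e^{-cn^{1/3}}$. Therefore $\sum_{j\le J}|b_j|\ge|w|^{-J}\cdot\tfrac12 c_0e^{-cL}\ge\exp(-C_2n^{1/3})$, so some index $j=j(x,y)\le J=O(n)$ satisfies $|b_j|\ge n^{-1}\exp(-C_2'n^{1/3})$. From here the proof is word for word that of Section~2: compare the empirical mean $\frac1T\sum_t\wt U^t_{j(x,y)}$ of the $j(x,y)$‑th output bit to the two candidate means (which differ by $\eta:=n^{-1}\exp(-C_2'n^{1/3})$), use Hoeffding to bound $\P_x[y\text{ beats }x]\le\exp(-T\eta^2/2)$, union‑bound over the $2^n$ competitors $y\ne x$, and take $T=\exp(C_3n^{1/3})$ with $C_3$ large; the finitely many small values of $n$ are absorbed into the constant $C$.
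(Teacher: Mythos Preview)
Your proof is correct and follows essentially the same route as the paper: derive the M\"obius generating-function identity (the paper's Lemma~\ref{lem:keyins}), verify the analogue of~\eqref{wbound} via $|\alpha+\beta e^{i\theta}|^{2}=1-\alpha\beta\,\theta^{2}+O(\theta^{4})$, then rerun Section~2 verbatim. Your explicit tail truncation at $J=O(n)$ to handle the unbounded output length is a detail the paper glosses over (it simply declares the remainder of the argument ``identical'' to that of Theorem~\ref{thm:main}), so in this respect your write-up is more complete.
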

\begin{proof}
To prove this theorem, an analog of Lemma~\ref{lem:key1} is needed:

\begin{lemma}\label{lem:keyins}
Given strings $x$ and $y$ in $\{0,1\}^n$, let $X^*$ and $Y^*$ denote the corresponding outputs of the insertion channel with parameter $\beta$, where $\alpha+\beta=1$.  Then for $w \in \mathbb{C}$, we have
\begin{equation}\label{eq:expectation3}
\E \left[ \sum_{j \geq 0}  (X^*_j-Y^*_j) w^{j+1} \right] =  \sum_{k=0}^{n-1} (x_k-y_k) \Bigl(\frac{\alpha w}{1-\beta w}  \Bigr)^{k+1} \,.
\end{equation}
\end{lemma}
We also need an analog of (\ref{wbound}): if 
$$ 
\zeta=e^{i\theta}=\frac{\alpha w}{1-\beta w}
$$ 
is on the unit circle, then 
\begin{equation}\label{analog}
w=\zeta/(\alpha+\beta \zeta) \quad \mbox{\rm satisfies} \;  |w|^2 \le 1+C\theta^2 \,,
\end{equation}
 for some constant $C=C(\alpha)$. This is immediate from (\ref{inside}), with $\alpha,\beta$ replacing $q,p$ there.

With Lemma~\ref{lem:keyins} and the inequality (\ref{analog}) in hand, the rest of the proof of Theorem~\ref{thm:insert} is identical to the proof of Theorem~\ref{thm:main}. 
\end{proof}

\begin{proof}[Proof of Lemma~\ref{lem:keyins}]
It is convenient to couple $X^*$ and $Y^*$ to use the same geometric variables $G_0,G_1,\ldots$ and the same inserted bits. Note that the choice of coupling does not affect $\E(X^*_{j}-Y^*_j)$. Write $a_k=x_k-y_k$ and $D_j:=X^*_{j}-Y^*_j$. Then
$$
\E(D_j)=\sum_{k=0}^j \P(G_0+\dots + G_k=j+1)a_k=\sum_{k=0}^j {j \choose k} \alpha^{k+1} \beta^{j-k} a_k \,.
$$
Therefore, using the classical expansion
 $$
\sum_{j=k}^\infty {j \choose k} s^{j-k} =(1-s)^{-k-1}
$$
we obtain that 
\begin{eqnarray*}
\E\left[ \sum_{j \geq 0}  D_j w^{j+1} \right] &=&   \sum_{j \geq 0}  \sum_{k=0}^j w^{j+1}  {j \choose k} \alpha^{k+1} \beta^{j-k} a_k \\
&=& \sum_{k \ge 0} a_k (\alpha w)^{k+1} \sum_{j \geq k} {j \choose k} (\beta w)^{j-k} \\
&=& \sum_{k \ge 0} a_k (\alpha w)^{k+1} (1-\beta w)^{-k-1} \,,
\end{eqnarray*}
which is the same as (\ref{eq:expectation3}).
\end{proof} 

\noindent{\bf Remark.} To combine deletions, insertions and substitutions, simply compose the linear transformation $w \mapsto pw+q$
that appears in (\ref{eq:expectation2}) with the M\"obius transformation $w \mapsto \alpha w/(1-\beta w)$. Each of these transformations maps the unit circle
to a smaller circle that is tangent to it at 1, and this also holds for their composition, in any order.


\section*{Acknowledgements}
We first learned of the trace reconstruction problem from Elchanan Mossel and Ben Morris. The second author is grateful to them, as well as to Ronen Eldan, Robin Pemantle and Perla Sousi for many discussions of the problem. The insightful suggestion by Elchanan that one should focus on deciding between
two specific candidates for the original bit string was particularly influential. We are also indebted to Miki Racz and Gireeja Ranade  for their help
with the exposition.


\bibliographystyle{plain}
\bibliography{tr}




\end{document}